\def\!{\,!\,}
\newtheorem{theorem}{Theorem}[section]
\newtheorem{corollary}[theorem]{Corollary}
\theoremstyle{definition}
\theoremstyle{remark}
\newtheorem{remark}[theorem]{Remark}
\numberwithin{equation}{section}
\begin{document}

\title{Operator inequalities related to weak $2$-positivity}
\author[M.S. Moslehian and J.I. Fujii]{Mohammad Sal Moslehian$^1$ and Jun Ichi Fujii$^2$}

\address{$^1$ Department of Pure Mathematics, Center of Excellence in
Analysis on Algebraic Structures (CEAAS), Ferdowsi University of
Mashhad, P.O. Box 1159, Mashhad 91775, Iran.}
\email{moslehian@ferdowsi.um.ac.ir and moslehian@member.ams.org}
\address{$^2$ Department of Art and Sciences (Information Science), Osaka Kyoiku University, Asahigaoka, Kashiwara, Osaka 58$2$-8582, Japan.}
\email{fujii@cc.osaka-kyoiku.ac.jp}
\keywords{Operator inequality, weakly $2$-positive map, operator geometric mean, Hua's inequality, Cauchy--Schwarz inequality.}

\subjclass[2010]{Primary 47A63; Secondary 46L05, 47A30.}

\begin{abstract}
In this paper we introduce the notion of weak $2$-positivity and
present some examples. We establish some operator Cauchy--Schwarz
inequalities involving the geometric mean and give some
applications. In particular, we present some operator versions of
Hua's inequality by using the Choi--Davis--Jensen inequality.

 \end{abstract} \maketitle

\section{Introduction}
Let $\mathbb{B}({\mathscr H}), \langle \cdot,\cdot\rangle)$ stand
for the algebra of all bounded linear operators on a complex Hilbert
space ${\mathscr H}$ and let $I$ denote the identity operator. In
the case when $\dim \mathscr{H} =n$, we identify
$\mathbb{B}(\mathscr{H})$ with the full matrix algebra
$\mathcal{M}_n$ of all $n \times n$ matrices with entries in the
complex field $\mathbb{C}$. An operator $ A\in
\mathbb{B}(\mathscr{H})$ is called positive (positive-semidefinite
for matrices) if $\langle A\xi, \xi\rangle \geq 0$ holds for every
$\xi\in \mathscr{H}$ and then we write $A\geq 0$. For self-adjoint
operators $A,B \in \mathbb{B}(\mathscr{H})$, we say $A\leq B$ if
$B-A\geq0$. A map $\Phi: \mathbb{B}({\mathscr H}) \to
\mathbb{B}({\mathscr K})$ is said to be {\it positive} if $\Phi(A)
\geq 0$ whenever $A \geq 0$. A map $\Phi:\mathbb{B}({\mathscr H})
\to \mathbb{B}({\mathscr K})$ is called {\it $2$-positive} if the
map $\Phi_2:\mathbb{B}({\mathscr H}\oplus{\mathscr H}) \to
\mathbb{B}({\mathscr K}\oplus{\mathscr K})$ defined by
$\Phi_2([A_{ij}]_{2\times 2})=[\Phi(A_{ij})]_{2\times 2}$ takes each
positive block matrix to a positive one. If $\Phi_2$ preserves the
positivity of each block matrix of the form
$\begin{bmatrix}A&C\\C&B\end{bmatrix}$, then we call $\Phi$ {\it
weakly $2$-positive}. We say that $\Phi: \mathbb{B}({\mathscr H})
\to \mathbb{B}({\mathscr K})$ is a $*$-map if $\Phi(A^*)=(\Phi(A))^*$.
Choi \cite[Corollary 4.4]{CHO} showed that a positive linear map is
weakly $2$-positive. On the other hand, the Moore--Penrose inverse
$^\dagger$ on the matrix algebra ${\mathcal M}_n$ gives a map
$\Phi^\dagger$ defined by $\Phi^\dagger(A)=A^\dagger$, which is a
nonlinear positive map while it is not weakly $2$-positive (and so
not $2$-positive). In fact, since $\Phi^\dagger$ assigns the
inverses to invertible matrices, we have
$$\begin{bmatrix}2I&I\\I&2I\end{bmatrix}=\begin{bmatrix}2&1\\1&2\end{bmatrix}\otimes I\ge0\quad\text{while}\quad
\begin{bmatrix}\Phi^\dagger(2I)&\Phi^\dagger(I)\\\Phi^\dagger(I)&\Phi^\dagger(2I)\end{bmatrix}=\begin{bmatrix}\frac{1}{2}&1\\1&\frac{1}{2}\end{bmatrix}\otimes I\not\ge0.$$

Next we present a non-trivial example of a weakly $2$-positive map,
which is not $2$-positive. Let us recall a useful criterion due to
Ando \cite[Theorem I.1]{AND}. It states that a block matrix
$T=\begin{pmatrix}A&C\\C^*&B\end{pmatrix}$ is positive if and only
if there exists a contraction $W$ such that
$C=A^{\frac{1}{2}}WB^{\frac{1}{2}}$. We first note that the
nonlinear map $X\mapsto(\det X)I$ on $\mathcal{M}_n$ is
$2$-positive. In fact, the condition
$\begin{pmatrix}A&C\\C^*&B\end{pmatrix}\geq0$ implies that
$C=A^{\frac{1}{2}}WB^{\frac{1}{2}}$ for some contraction $W$. Then
$|\det W|\leq 1$ and $\det C=\sqrt{\det A}\det W\sqrt{\det B}$.
Using again the above criterion we conclude that $\Phi$ is
$2$-positive. The map $\Phi_\alpha(X)=X^*+\alpha(\det X)I$ for
$\alpha\geqq0$ is neither linear nor conjugate linear. It is clearly
weakly $2$-positive. Moreover, let
$$A=\begin{pmatrix}1&0\\0&0\end{pmatrix},\ B=\begin{pmatrix}2&2\\2&2\end{pmatrix},\ C=\begin{pmatrix}1&1\\0&0\end{pmatrix}.$$
Then $A^{\frac{1}{2}}=A$, $B^{\frac{1}{2}}=B/2$ and $C=A^{\frac{1}{2}}IB^{\frac{1}{2}}$, so that $\begin{pmatrix}A&C\\C^*&B\end{pmatrix}\ge0$. Noting to $\det A=\det B=\det C=0$, we have
$$\begin{pmatrix}\Phi_\alpha(A)&\Phi_\alpha(C)\\\Phi_\alpha(C^*)&\Phi_\alpha(B)\end{pmatrix}=\begin{pmatrix}A&C^*\\C&B\end{pmatrix}+\alpha \begin{pmatrix}0&0\\0&0\end{pmatrix}=\begin{pmatrix}1&0&1&0\\0&0&1&0\\1&1&2&2\\0&0&2&2\end{pmatrix}$$
which is not positive since its determinant is negative. Therefore
$\Phi_\alpha$ is not 2-positive for any $\alpha\geqq0$. Furthermore,
these matrices $A, B, C$ can be used to show that the transpose map
$\Phi(A)=A^{tr}$ on $\mathcal{M}_2$ is a weakly $2$-positive linear
map that is not $2$-positive.

The geometric mean $A\# B$ of two positive operators $A, B \in
\mathbb{B}({\mathscr H})$ is characterized by Ando \cite{AND}
$$A\# B=\max\left\{X=X^*\in \mathbb{B}({\mathscr H}): ~ \left[\begin{array}{cc}A&X\\X&B\end{array}\right]\geq 0\right\}.$$
Then we immediately have $\Phi(A\# B)\le \Phi(A)\#\Phi(B)$ for any
weakly $2$-positive map $\Phi$. Ando \cite{AND} also characterized
the harmonic mean $A\!B$ by
$$A\! B=\max\left\{X=X^*\in \mathbb{B}({\mathscr H}): ~ \left[\begin{array}{cc}2A&0\\0&2B\end{array}\right]\geq \left[\begin{array}{cc}X&X\\X&X\end{array}\right]\right\}.$$
Then, for a weakly $2$-positive map $\Phi$, we have
$$\left[\begin{array}{cc}\Phi(2A-A\! B)&\Phi(-A\! B)\\\Phi(-A\! B)&\Phi(2B-A\! B)\end{array}\right]\ge0.$$
If $\Phi$ is linear in this case, we have
$$
\left[\begin{array}{cc}2\Phi(A)&0\\0&2\Phi(B)\end{array}\right]\ge\left[\begin{array}{cc}\Phi(A\! B)&\Phi(A\! B)\\\Phi(A\! B)&\Phi(A\! B)\end{array}\right],
$$
and hence $\Phi(A\! B)\le \Phi(A)\!\Phi(B)$ holds, which is shown in  \cite[Cor.IV.1.3]{AND}.

In this note we present operator Cauchy--Schwarz inequalities for
$2$-weakly positive and $2$-positive maps involving the operator
geometric mean and give two operator Hua types inequalities as
application.


\section{Cauchy--Schwarz type inequalities}

One of the fundamental inequalities in mathematics is the
Cauchy--Schwarz inequality. It states that in an inner product space
$(\mathscr{X},\langle\cdot,\cdot\rangle)$
\begin{equation*}
|\langle x,y\rangle| \leq \|x\|\|y\|\qquad(x,y\in \mathscr{X})\,.
\end{equation*}
There are many generalizations and applications of this inequality
for integrals and isotone functionals; see the monograph \cite{DRA}.
Moreover, some Cauchy--Schwarz inequalities for Hilbert space
operators and matrices involving unitarily invariant norms were
given by Joci\'c \cite{JOC1} and Kittaneh \cite{KIT}. Also Joi\c{t}a
\cite{JOI}, Ili\v sevi\'c and Varo\v{s}anec \cite{I-V}, the first
author and Persson  \cite{MOS2}, Arambasi\'c, Baki\'c and the first
author \cite{ABM} have investigated the Cauchy--Schwarz inequality
and its various reverses in the framework of $C^*$-algebras and
Hilbert $C^*$-modules. Tanahashi, A. Uchiyama and M. Uchiyama
\cite{TU} investigated some Schwarz type inequalities and their
converses in connection with semi-operator monotone functions. A
refinement of the Cauchy--Schwarz inequality involving connections
is investigated by Wada \cite{WAD}. An application of the
covariance-variance inequality to the Cauchy--Schwarz inequality was
obtained by Fujii, Izumino, Nakamoto and Seo \cite{nak}. Some
operator versions of the Cauchy--Schwarz inequality with simple
conditions for the case of equality are presented by the second
author \cite{FUJ2}.

To achieve our main result we need the polar decomposition of
bounded linear operators. Recall that if $A\in \mathbb{B}({\mathscr
H})$, then there exists a unique partial isometry $U \in
\mathbb{B}({\mathscr H})$ such that $A=U|A|$ and $\ker(U)=\ker(|A|)$
(the polar decomposition). Then $U^*A=|A|$ and $A^*=U^*|A^*|$ is the
polar decomposition of $A^*$.

\begin{theorem}\label{main}
Let $A, B, X, Y\in \mathbb{B}({\mathscr H})$  be arbitrary
operators.
\begin{itemize}
\item [(i)] If $\Phi$ is a weakly $2$-positive map, then $\Phi(|X^*A^*Y|) \leq \Phi(V^*X^*|A|XV) \,\#\, \Phi(Y^*|A^*|Y)\,,$ in which $X^*A^*Y=V|X^*A^*Y|$ denotes the polar decomposition.
\item[(ii)] If $\Phi$ is a $2$-positive $*$-map, then
$|\Phi(X^*A^*Y)| \leq U^*\Phi(X^*|A|X)U \,\#\, \Phi(Y^*|A^*|Y)\,,$
in which $\Phi(X^*A^*Y)=U|\Phi(X^*A^*Y)|$ denotes the polar
decomposition.
\end{itemize}
\end{theorem}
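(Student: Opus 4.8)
The plan is to reduce both statements to the maximality characterization of the operator geometric mean recalled above, after first producing a single positive block matrix out of the data $A, X, Y$. Write the polar decomposition $A = U|A|$, so that $A^* = |A|U^*$ and, since $U^*U$ is the projection onto $(\ker|A|)^\perp$, one has $U^*U|A| = |A|$ and hence $|A^*| = U|A|U^*$. Setting $R = |A|^{1/2}X$ and $S = |A|^{1/2}U^*Y$, a direct computation gives $R^*R = X^*|A|X$, $S^*S = Y^*U|A|U^*Y = Y^*|A^*|Y$, and $R^*S = X^*|A|U^*Y = X^*A^*Y$. Therefore
\[
M := \begin{bmatrix} X^*|A|X & X^*A^*Y \\ Y^*AX & Y^*|A^*|Y \end{bmatrix} = \begin{bmatrix} R^* \\ S^* \end{bmatrix}\begin{bmatrix} R & S \end{bmatrix} \geq 0 .
\]
This factorization is the heart of the argument, and verifying it is where I expect the only real care to be needed, since it hinges on the partial-isometry identities $A^* = |A|U^*$ and $|A^*| = U|A|U^*$.

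For part (i), the off-diagonal entry $X^*A^*Y$ of $M$ is not self-adjoint, so I first symmetrize it using its polar decomposition $X^*A^*Y = V|X^*A^*Y|$, for which $|X^*A^*Y| = V^*X^*A^*Y$. Conjugating $M$ by the block-diagonal operator $\mathrm{diag}(V, I)$ --- which preserves positivity --- yields
\[
N := \begin{bmatrix} V^*X^*|A|XV & |X^*A^*Y| \\ |X^*A^*Y| & Y^*|A^*|Y \end{bmatrix} \geq 0 ,
\]
a matrix of the form $\begin{bmatrix} P & C \\ C & Q \end{bmatrix}$ with $C = |X^*A^*Y|$ self-adjoint. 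Applying the weakly $2$-positive map $\Phi$ preserves the positivity of $N$, so
\[
\begin{bmatrix} \Phi(V^*X^*|A|XV) & \Phi(|X^*A^*Y|) \\ \Phi(|X^*A^*Y|) & \Phi(Y^*|A^*|Y) \end{bmatrix} \geq 0 ,
\]
and the maximality characterization of the geometric mean immediately gives $\Phi(|X^*A^*Y|) \leq \Phi(V^*X^*|A|XV) \,\#\, \Phi(Y^*|A^*|Y)$, which is (i).

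For part (ii) the order of the two operations is reversed, and this is precisely what forces the stronger hypotheses. Since $\Phi$ is $2$-positive, I may apply $\Phi$ directly to the (non-symmetric) positive block $M$; the $*$-map property ensures $\Phi(Y^*AX) = \Phi(X^*A^*Y)^*$, so the image is a genuine self-adjoint positive block
\[
\begin{bmatrix} \Phi(X^*|A|X) & \Phi(X^*A^*Y) \\ \Phi(X^*A^*Y)^* & \Phi(Y^*|A^*|Y) \end{bmatrix} \geq 0 .
\]
Now I symmetrize \emph{after} applying $\Phi$: writing the polar decomposition $\Phi(X^*A^*Y) = U|\Phi(X^*A^*Y)|$ and conjugating by $\mathrm{diag}(U, I)$ turns the off-diagonal entries into $U^*\Phi(X^*A^*Y) = |\Phi(X^*A^*Y)|$, giving the positive block
\[
\begin{bmatrix} U^*\Phi(X^*|A|X)U & |\Phi(X^*A^*Y)| \\ |\Phi(X^*A^*Y)| & \Phi(Y^*|A^*|Y) \end{bmatrix} \geq 0 .
\]
One final application of the maximality characterization yields $|\Phi(X^*A^*Y)| \leq U^*\Phi(X^*|A|X)U \,\#\, \Phi(Y^*|A^*|Y)$, which is (ii). The contrast between the two parts --- conjugate-then-apply-$\Phi$ versus apply-$\Phi$-then-conjugate --- explains why weak $2$-positivity suffices for (i) while (ii) needs full $2$-positivity together with the $*$-map condition.
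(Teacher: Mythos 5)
Your proposal is correct and follows essentially the same route as the paper: both hinge on the positivity of the block matrix $\left[\begin{smallmatrix}X^*|A|X&X^*A^*Y\\Y^*AX&Y^*|A^*|Y\end{smallmatrix}\right]$ obtained from the polar decomposition of $A$ (you verify it via the Gram factorization $[R\ S]^*[R\ S]$, while the paper factors $\left[\begin{smallmatrix}|A|&A^*\\A&|A^*|\end{smallmatrix}\right]$ explicitly and conjugates by $\mathrm{diag}(X,Y)$ --- a cosmetic difference), and then symmetrize with the relevant polar partial isometry before applying $\Phi$ in (i) and after applying $\Phi$ in (ii), concluding with Ando's maximality characterization of the geometric mean. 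Your closing observation about why (ii) requires full $2$-positivity and the $*$-map condition matches the paper's structure exactly.
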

\begin{proof}
(i) First note that
$$\left[\begin{array}{cc}|A|&A^*\\A&|A^*|\end{array}\right]= \left[\begin{array}{cc}I&0\\0&W\end{array}\right] \left[\begin{array}{cc}|A|^{1/2}&0\\|A|^{1/2}&0\end{array}\right]
\left[\begin{array}{cc} |A|^{1/2}& |A|^{1/2}\\0&0\end{array}\right]
\left[\begin{array}{cc}I&0\\0&W \end{array}\right]^* \geq 0,$$ where
we apply the polar decomposition $A=W|A|$. Hence
\begin{eqnarray}\label{MMM}
\left[\begin{array}{cc}X^*|A|X&X^*A^*Y\\Y^*AX&Y^*|A^*|Y \end{array}\right]= \left[\begin{array}{cc}X^*&0\\0&Y^*\end{array}\right] \left[\begin{array}{cc}|A|&A^*\\A&|A^*|\end{array}\right]
\left[\begin{array}{cc}X&0\\0&Y\end{array}\right]\geq 0\,.
\end{eqnarray}
Utilizing the polar decomposition $X^*A^*Y=V|X^*A^*Y|$ we obtain
\begin{eqnarray*}
\left[\begin{array}{cc}V^*(X^*|A|X)V& |X^*A^*Y|\\ |X^*A^*Y|& Y^*|A^*|Y\end{array}\right]&=& \left[\begin{array}{cc}V^*(X^*|A|X)V& V^*(X^*A^*Y) \\ (Y^*AX)V& Y^*|A^*|Y\end{array}\right]\\
&=& \left[\begin{array}{cc} V& 0\\ 0& I\end{array}\right]^* \left[\begin{array}{cc}X^*|A|X& X^*A^*Y\\Y^*AX& Y^*|A^*|Y\end{array}\right] \left[\begin{array}{cc}V& 0\\ 0 & I\end{array}\right]\\
& \geq& 0\,.
\end{eqnarray*}
Due to the weak $2$-positivity of $\Phi$, we get
\begin{eqnarray*}
\left[\begin{array}{cc}\Phi(V^*(X^*|A|X)V)& \Phi(|X^*A^*Y|)\\ \Phi(|X^*A^*Y|)& \Phi(Y^*|A^*|Y)\end{array}\right] \geq 0\,.
\end{eqnarray*}
Thus we obtain
$$\Phi(|X^*A^*Y|) \leq \Phi(V^*X^*|A|XV) \,\#\, \Phi(Y^*|A^*|Y)\,.$$
(ii) It follows from \eqref{MMM} and $2$-positivity of $\Phi$ that
\begin{eqnarray*}
\left[\begin{array}{cc}\Phi(X^*|A|X)& \Phi(X^*A^*Y)\\ \Phi(Y^*AX)& \Phi(Y^*|A^*|Y)\end{array}\right] \geq 0\,,
\end{eqnarray*}
whence, by using the polar decoposition
$\Phi(X^*A^*Y)=U|\Phi(X^*A^*Y)|$, we get
\begin{align*}
\left[\begin{array}{cc}U^*\Phi(X^*|A|X)U& |\Phi(X^*A^*Y)|\\
|\Phi(X^*A^*Y)|& \Phi(Y^*|A^*|Y)\end{array}\right]&=\left[\begin{array}{cc}U^*\Phi(X^*|A|X)U& U^*\Phi(X^*A^*Y)\\
\Phi(Y^*AX)U& \Phi(Y^*|A^*|Y)\end{array}\right]\\
&=\left[\begin{array}{cc} U& 0\\ 0& I\end{array}\right]^*
\left[\begin{array}{cc}\Phi(X^*|A|X)& \Phi(X^*A^*Y)\\ \Phi(Y^*AX)&
\Phi(Y^*|A^*|Y)\end{array}\right] \left[\begin{array}{cc} U& 0\\ 0&
I\end{array}\right]\\& \geq 0\,,
\end{align*}
which gives the desired inequality.
\end{proof}
\begin{remark}
The proof of Theorem \ref{main}(ii) shows that if $A=A^*$ and $Y=X$,
then the ``$2$-positivity'' of $\Phi$ can be replaced by the weaker
assumption ``weak $2$-positivity''. Then we get (ii)$^\prime$ If
$\Phi$ is a weakly $2$-positive $*$-map, then $|\Phi(X^*AX)| \leq
U^*\Phi(X^*|A|X)U \,\#\, \Phi(X^*|A|X)\,,$ in which
$\Phi(X^*AX)=U|\Phi(X^*AX)|$ denotes the polar decomposition.
\end{remark}
Now consider the separable Hilbert space $H=\ell_2$. Take the
$2$-positive map $\Phi(A)=\langle Ae,e\rangle$ where $A\in
\mathbb{B}({\mathscr H})$, $e=(1, 0, 0, \cdots)$ and
$X=x\otimes\overline{e}$, $Y=y\otimes\overline{e}$ where
$(x\otimes\overline{y})(z):=\langle z,y\rangle x$. Then we get from
Theorem \ref{main} (ii) the following Cauchy--Schwarz inequality in
Hilbert spaces:
\begin{corollary}
Let $A \in \mathbb{B}({\mathscr H})$ and $x,y \in \mathscr{H}$. Then
$$|\langle Ax,y\rangle|^2 \leq \langle |A|x,x\rangle \langle |A^*|y,y\rangle\,.$$
\end{corollary}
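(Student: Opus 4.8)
The plan is to specialize Theorem~\ref{main}(ii) to the concrete data
$\mathscr{H}=\ell_2$, the $2$-positive $*$-map $\Phi(T)=\langle Te,e\rangle$,
and the rank-one operators $X=x\otimes\overline{e}$, $Y=y\otimes\overline{e}$,
and then to translate the resulting operator inequality (living in
$\mathbb{B}(\mathbb{C})\cong\mathbb{C}$) into the claimed scalar inequality.
First I would record that $\Phi$ is indeed a $2$-positive $*$-map: it is the
vector state at $e$, so $\Phi(T^*)=\langle T^*e,e\rangle=\overline{\langle
Te,e\rangle}=\Phi(T)^*$, and a vector state is completely positive, hence in
particular $2$-positive. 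I would also note that on the one-dimensional space
$\mathbb{C}$ the operator geometric mean of two nonnegative scalars reduces to
$a\,\#\,b=\sqrt{ab}$, the partial isometry $U$ in the polar decomposition is a
unimodular scalar, and $U^*(\cdot)U$ acts trivially on scalars, so the
conclusion of Theorem~\ref{main}(ii) becomes the purely scalar statement
$|\Phi(X^*A^*Y)|\le\sqrt{\Phi(X^*|A|X)\,\Phi(Y^*|A^*|Y)}$.

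The core of the argument is the explicit computation of the three quantities
$\Phi(X^*A^*Y)$, $\Phi(X^*|A|X)$, and $\Phi(Y^*|A^*|Y)$ in terms of the vectors
$x,y$ and the operator $A$. Using $(z\otimes\overline{w})^*=w\otimes\overline{z}$
and the composition rule $(u\otimes\overline{v})(p\otimes\overline{q})=\langle
p,v\rangle\,u\otimes\overline{q}$, I would compute $X^*A^*Y=(e\otimes
\overline{x})A^*(y\otimes\overline{e})=\langle A^*y,x\rangle\,e\otimes
\overline{e}$. Evaluating the state then gives $\Phi(X^*A^*Y)=\langle
A^*y,x\rangle\,\langle e,e\rangle=\langle A^*y,x\rangle=\overline{\langle
Ax,y\rangle}$, so $|\Phi(X^*A^*Y)|=|\langle Ax,y\rangle|$. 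An identical
calculation yields $X^*|A|X=\langle|A|x,x\rangle\,e\otimes\overline{e}$ and hence
$\Phi(X^*|A|X)=\langle|A|x,x\rangle$, and similarly $\Phi(Y^*|A^*|Y)=
\langle|A^*|y,y\rangle$. Substituting these into the specialized inequality
produces $|\langle Ax,y\rangle|\le\sqrt{\langle|A|x,x\rangle\,\langle|A^*|y,y
\rangle}$, and squaring gives exactly the stated conclusion.

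The step I expect to require the most care is the bookkeeping for the rank-one
operators --- keeping the conjugations in $x\otimes\overline{y}$ straight and
confirming that each composite collapses to a scalar multiple of $e\otimes
\overline{e}$, whose state value is $\langle e,e\rangle=1$. This is routine but
error-prone, and it is the only place where the definition
$(x\otimes\overline{y})(z)=\langle z,y\rangle x$ is used nontrivially. Everything
else is a direct appeal to Theorem~\ref{main}(ii) together with the two
simplifications peculiar to the scalar setting (the geometric mean becoming a
square root and the unitary $U$ dropping out). No genuine obstacle arises; the
content of the corollary is entirely carried by the theorem, and the proof is
essentially the verification that the chosen $\Phi$, $X$, $Y$ reproduce the
classical Cauchy--Schwarz inequality as a special case.
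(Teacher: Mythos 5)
Your proposal is correct and follows exactly the paper's route: the paper derives the corollary by specializing Theorem~\ref{main}(ii) to the vector state $\Phi(T)=\langle Te,e\rangle$ and the rank-one operators $X=x\otimes\overline{e}$, $Y=y\otimes\overline{e}$, which is precisely your argument (you merely spell out the rank-one computations and the scalar simplifications $a\,\#\,b=\sqrt{ab}$ and $U^*(\cdot)U=\mathrm{id}$ that the paper leaves implicit). The only point you gloss over, the degenerate case $\Phi(X^*A^*Y)=0$ where the partial isometry $U$ is $0$ rather than unimodular, is harmless since the inequality is then trivial.
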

Considering the positive linear functional ${\rm tr}(\cdot)$ on
$\mathcal{M}_n$, it follows from Theorem \ref{main}(i) that
\begin{corollary}
Let $A, X, Y \in \mathcal{M}_n$. Then
$${\rm tr}(|X^*A^*Y|)^2 \leq {\rm tr}(X^*|A|X) {\rm tr}(Y^*|A^*|Y)\,.$$
\end{corollary}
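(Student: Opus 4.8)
The plan is to specialize Theorem \ref{main}(i) to the map $\Phi = \mathrm{tr}$ on $\mathcal{M}_n$. Since the trace is a positive linear functional, Choi's result quoted in the introduction guarantees that it is weakly $2$-positive, so the theorem applies. Writing $X^*A^*Y = V|X^*A^*Y|$ for the polar decomposition, Theorem \ref{main}(i) yields
$$\mathrm{tr}(|X^*A^*Y|) \leq \mathrm{tr}(V^*X^*|A|XV) \,\#\, \mathrm{tr}(Y^*|A^*|Y)\,.$$
Both arguments on the right are traces of the positive operators $V^*X^*|A|XV$ and $Y^*|A^*|Y$, hence nonnegative scalars; and for nonnegative scalars $a,b$ the operator geometric mean reduces to $a \# b = \sqrt{ab}$. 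Thus the right-hand side equals $\sqrt{\mathrm{tr}(V^*X^*|A|XV)\,\mathrm{tr}(Y^*|A^*|Y)}$, and squaring the inequality (both sides being nonnegative) gives
$$\mathrm{tr}(|X^*A^*Y|)^2 \leq \mathrm{tr}(V^*X^*|A|XV)\,\mathrm{tr}(Y^*|A^*|Y)\,.$$

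It remains to remove the partial isometry $V$ from the first factor, replacing $\mathrm{tr}(V^*X^*|A|XV)$ by the cleaner quantity $\mathrm{tr}(X^*|A|X)$. Setting $M = X^*|A|X \geq 0$ and using the cyclicity of the trace, I would rewrite $\mathrm{tr}(V^*MV) = \mathrm{tr}(M\,VV^*)$. Because $V$ is a partial isometry, $P := VV^*$ is an orthogonal projection, so $P \leq I$. Monotonicity of the trace on positive operators (for instance via $M^{1/2}PM^{1/2} \leq M$, followed by the cyclicity $\mathrm{tr}(M^{1/2}PM^{1/2}) = \mathrm{tr}(MP)$) then gives $\mathrm{tr}(MP) \leq \mathrm{tr}(M)$, that is, $\mathrm{tr}(V^*X^*|A|XV) \leq \mathrm{tr}(X^*|A|X)$. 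Substituting this bound into the squared inequality yields the claimed Cauchy--Schwarz inequality.

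The argument is largely a chain of reductions, and I expect the only genuinely delicate point to be the handling of $V$: one must notice that the factor $V$ appearing in Theorem \ref{main}(i) is harmless under the trace precisely because $VV^*$ is a projection, and that the resulting estimate goes in the favorable direction, enlarging the right-hand side rather than shrinking it. Everything else---positivity and linearity of the trace, the scalar reduction of the geometric mean, and the legality of squaring a nonnegative inequality---is routine.
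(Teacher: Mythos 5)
Your proof is correct and follows essentially the same route as the paper, which simply cites Theorem \ref{main}(i) with $\Phi = \mathrm{tr}$ and leaves the details implicit. In fact you supply the one step the paper glosses over---removing the partial isometry $V$ via $\mathrm{tr}(V^*MV) = \mathrm{tr}(MVV^*) \leq \mathrm{tr}(M)$ for $M = X^*|A|X \geq 0$---and your handling of it (projection $VV^* \leq I$, cyclicity, monotonicity of the trace) is exactly right.
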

\begin{corollary}
Let $X \in \mathbb{B}({\mathscr H})$.
\begin{itemize}
\item[(i)]  If $\Phi$ is a weakly $2$-positive map, then
$\Phi(|X|)\leq \Phi(V^*|X^*|V)\,\#\,\Phi(|X|)\,,$\\ where $X=V|X|$
is the polar decomposition.
\item [(ii)]  If $\Phi$ is a $2$-positive $*$-map, then
$|\Phi(X)| \leq U^*\Phi(|X^*|^{1/2})U\,\#\,\Phi(|X|^{3/2})\,,$\\
where $\Phi(X)=U|\Phi(X)|$ is the polar decomposition.
\end{itemize}
\end{corollary}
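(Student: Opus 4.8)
The plan is to deduce both parts directly from Theorem~\ref{main} by substituting cleverly chosen operators built from $X$ and its polar decomposition $X=V|X|$. Two standard facts recalled before the theorem will be used repeatedly: $V^*V$ is the support projection of $|X|$, so that $V^*V|X|=|X|$, and $V|X|V^*=|X^*|$.

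For part~(i), I would apply Theorem~\ref{main}(i) to the triple $(X^*,I,I)$ in place of its $(A,X,Y)$. Then the product corresponding to $X^*A^*Y$ is $(X^*)^*=X$, whose polar decomposition is the given $X=V|X|$, so the theorem's partial isometry is exactly $V$. Since here $|A|=|X^*|$ and $|A^*|=|X|$, the conclusion of the theorem reads $\Phi(|X|)\le\Phi(V^*|X^*|V)\,\#\,\Phi(|X|)$, which is the assertion; in fact, since $V^*|X^*|V=|X|$ and $T\#T=T$, this inequality is actually an equality, although that refinement is not needed.

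For part~(ii), which carries the main content, I would apply Theorem~\ref{main}(ii) to the triple $(|X|^{1/2}V^*,\,I,\,|X|^{1/2})$ in place of its $(A,X,Y)$. Writing $A=|X|^{1/2}V^*$, so that $A^*=V|X|^{1/2}$, the quantities entering the theorem are computed as follows. The relevant product is $A^*\,|X|^{1/2}=V|X|^{1/2}\,|X|^{1/2}=V|X|=X$, so the theorem's partial isometry is the given $U$ in $\Phi(X)=U|\Phi(X)|$ and the left-hand side is $|\Phi(X)|$. Next, $A^*A=V|X|^{1/2}\,|X|^{1/2}V^*=V|X|V^*=|X^*|$, hence $|A|=|X^*|^{1/2}$, and since the middle operator is $I$ the first factor on the right is $U^*\Phi(|X^*|^{1/2})U$. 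Finally, $AA^*=|X|^{1/2}(V^*V)|X|^{1/2}=|X|$, hence $|A^*|=|X|^{1/2}$, and the second factor is $\Phi(|X|^{1/2}|A^*||X|^{1/2})=\Phi(|X|^{3/2})$. Thus Theorem~\ref{main}(ii) yields $|\Phi(X)|\le U^*\Phi(|X^*|^{1/2})U\,\#\,\Phi(|X|^{3/2})$, as desired.

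The only point requiring care is the evaluation of the moduli $|A|$ and $|A^*|$ of the non-self-adjoint operator $A=|X|^{1/2}V^*$ in part~(ii); both reduce to the two partial-isometry identities above, namely $V|X|V^*=|X^*|$ for $A^*A$ and $V^*V|X|=|X|$ for $AA^*$. Once these are verified, the remaining steps are pure substitution into Theorem~\ref{main}, so I anticipate no further obstacle.
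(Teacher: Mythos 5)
Your proposal is correct and follows essentially the same route as the paper: both parts are obtained by pure substitution into Theorem~\ref{main} using the factorization $X=V|X|^{1/2}\,|X|^{1/2}$; the only cosmetic difference is that you absorb $V^*$ into the middle operator (taking $A=|X|^{1/2}V^*$ and computing $|A|=|X^*|^{1/2}$, $|A^*|=|X|^{1/2}$ from $V|X|V^*=|X^*|$ and $V^*V|X|=|X|$), whereas the paper takes $A=|X|^{1/2}$ with $X_{\mathrm{thm}}=V^*$ and invokes the equivalent identity $V|X|^{1/2}V^*=|X^*|^{1/2}$. Your side remark that part~(i) is in fact an equality, since $V^*|X^*|V=|X|$ and $T\,\#\,T=T$, is also correct.
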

\begin{proof}
Let $X=V|X|$ be the polar decomposition of $X$. It follows from
Theorem \ref{main} (i) that
\begin{itemize}
\item[(i)]
$\Phi(|X|)= \Phi(|V^*XI|) \leq
\Phi(IV^*|X^*|VI)\,\#\,\Phi(|X|)=\Phi(V^*|X^*|V)\,\#\,\Phi(|X|)\,.$
\item [(ii)] Utilizing Theorem \ref{main} (ii) we have
\begin{eqnarray*}
|\Phi(X)| &=& |\Phi(V|X|^{1/2}|X|^{1/2})|\\
&\leq& U^*\Phi(V|X|^{1/2}V^*)U\,\#\, \Phi(|X|^{1/2}|X|^{1/2}|X|^{1/2})\\
&=& U^*\Phi(|X^*|^{1/2})U\,\#\,\Phi(|X|^{3/2})\,.
\end{eqnarray*}
\vspace*{-40pt}\end{itemize}
\end{proof}

\section{Applications to Hua's inequality}

Hua's inequality states that
$$\left(\delta- \sum^n_{i=1} x_i\right)^2+ \alpha \sum^n_{i= 1}
x^2_i\geq \frac{\alpha}{n+ \alpha}\delta^2\,,$$ where $\delta$,
$\alpha$ are positive numbers and $x_i$ $(i= 1,2,\dots, n)$ are real
numbers. There are several refinement and improvement of this
inequality in the literature; see \cite{MOS} and references therein.
An operator version of Hua's inequality was given by Drnov\v sek
\cite{DRN}. Moreover, Radas and \v{S}iki\'c \cite{R-S} generalized
the Hua inequality for linear operators in real inner product
spaces. A refinement of Hua's inequality was presented by the second
author in \cite{FUJ} by showing that if $A, B$ are bounded linear
operators acting on a Hilbert space $\mathscr{H}$ and $\varphi$ is a
state on $\mathbb{B}(\mathscr{H})$, then
\begin{eqnarray}\label{fujhua}
(1-|\varphi(B^*A)|)^2 \geq (1-\sqrt{\varphi(A^*A)\varphi(B^*B)})^2
\geq \varphi(I-A^*A)\varphi(I-B^*B),
\end{eqnarray}
which in turn gives an extension of the above classical Hua's
inequality by considering $\varphi$ as the normalized trace on the
matrix algebra $\mathcal{M}_n$ and some suitable diagonal matrices.
An extension in the setting of Hilbert $C^*$-modules and operators
on Hilbert spaces was given by the first author in \cite{MOS}.

Our first result in this section gives an extension of
\eqref{fujhua}. Recall that a contraction is an operator $A$ of norm
less than or equal one.

\begin{theorem}
Let $\Phi$ be a $2$-positive $*$-map and let $A, B, X, Y\in
\mathbb{B}({\mathscr H})$  be arbitrary operators. If
$\Phi(X^*A^*Y)=U|\Phi(X^*A^*Y)|$ is the polar decomposition of
$\Phi(X^*A^*Y)$, and $\Phi(Y^*|A^*|Y)$ and $\Phi(X^*|A|X)$ are
contractions, then
$$I-|\Phi(X^*A^*Y)| \geq U^*\big(I-\Phi(X^*|A|X)\big)U\, \#\, \big(I-\Phi(Y^*|A^*|Y)\big)\,.$$
\end{theorem}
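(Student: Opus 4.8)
The plan is to combine the operator Cauchy--Schwarz inequality of Theorem \ref{main}(ii) with the joint concavity of the geometric mean. I would abbreviate $P := U^*\Phi(X^*|A|X)U$ and $Q := \Phi(Y^*|A^*|Y)$, and first verify that $P$ and $Q$ are positive contractions, i.e. $0 \le P \le I$ and $0 \le Q \le I$. Positivity is immediate: since $X^*|A|X \ge 0$ and $Y^*|A^*|Y \ge 0$ and a $2$-positive map is in particular positive, both $\Phi(X^*|A|X)$ and $Q=\Phi(Y^*|A^*|Y)$ are positive, and conjugating the former by $U$ keeps $P \ge 0$. The upper bounds come from the contraction hypotheses: $\Phi(Y^*|A^*|Y) \le I$ gives $Q \le I$, while $\Phi(X^*|A|X) \le I$ together with $U^*U \le I$ (as $U$ is a partial isometry) yields $P = U^*\Phi(X^*|A|X)U \le U^*U \le I$.

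Next, Theorem \ref{main}(ii) gives $|\Phi(X^*A^*Y)| \le P \,\#\, Q$, and therefore
$$I - |\Phi(X^*A^*Y)| \ge I - P\,\#\,Q\,.$$
Since $I-P$ and $I-Q$ are positive, the geometric mean $(I-P)\,\#\,(I-Q)$ is defined, and it remains to establish the purely operator-theoretic inequality
$$I - P\,\#\,Q \ge (I-P)\,\#\,(I-Q)$$
for positive contractions $P,Q$.

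The crux is this last inequality, which I would derive from the superadditivity of the geometric mean. As $\#$ is an operator mean it is jointly concave and positively homogeneous (Ando \cite{AND}), hence superadditive: $S_1\,\#\,T_1 + S_2\,\#\,T_2 \le (S_1+S_2)\,\#\,(T_1+T_2)$. Applying this with $S_1=P$, $T_1=Q$, $S_2=I-P$, $T_2=I-Q$, the right-hand side collapses to $I\,\#\,I=I$, so $P\,\#\,Q + (I-P)\,\#\,(I-Q) \le I$, which rearranges to the desired bound; combining it with the previous display completes the proof. The scalar shadow of this step is the elementary $1-\sqrt{pq}\ge\sqrt{(1-p)(1-q)}$ for $p,q\in[0,1]$, equivalent to the AM--GM inequality $p+q\ge 2\sqrt{pq}$; the only genuine work is lifting it to operators, which superadditivity of $\#$ accomplishes. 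I expect the main obstacle to be the careful bookkeeping that $P$ is a contraction (the partial-isometry conjugation $U^*U\le I$) and invoking joint concavity in exactly the homogeneous form that yields superadditivity, both of which are routine.
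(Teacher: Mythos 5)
Your proposal is correct and takes essentially the paper's route: Theorem \ref{main}(ii) combined with superadditivity of the geometric mean (joint concavity plus positive homogeneity), with the superadditivity sum collapsing to $I\,\#\,I=I$. One bookkeeping caveat: writing $P=U^*\Phi(X^*|A|X)U$, you prove $I-|\Phi(X^*A^*Y)|\geq (I-P)\,\#\,(I-Q)$, whereas the theorem's right-hand side is $U^*\big(I-\Phi(X^*|A|X)\big)U\,\#\,(I-Q)$ with $U^*\big(I-\Phi(X^*|A|X)\big)U=U^*U-P\leq I-P$ (since $U^*U\leq I$), so your inequality is in fact marginally stronger and the stated form follows by one extra application of monotonicity of $\#$ --- the paper avoids this by keeping $U^*(\cdot)U$ intact, so its superadditivity sum collapses to $U^*U\,\#\,I$, which it then bounds by $I\,\#\,I=I$ via monotonicity.
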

\begin{proof}
Theorem \ref{main} (ii) ensures that
\begin{eqnarray}\label{oct}
I-|\Phi(X^*A^*Y)|\geq I- \Big(U^*\Phi(X^*|A|X)U\, \#\,
\Phi(Y^*|A^*|Y)\Big)\,.
\end{eqnarray}
Using the properties of the geometric mean (see \cite[Chapter
5]{seo}), we get
\begin{eqnarray*}
&&\hspace{-1.5cm} \Big(U^*(I-\Phi(X^*|A|X))U \,\#\, \big(I-\Phi(Y^*|A^*|Y)\big)\Big) + \Big(U^*\Phi(X^*|A|X)U \,\#\, \Phi(Y^*|A^*|Y)\Big) \\
&\leq& U^*U \# I \,\,\qquad\qquad\qquad\quad\qquad\qquad\quad
(\mbox{by the subadditivity of the
geometric mean})\\
&\leq& I \# I\qquad\qquad\qquad\qquad\qquad\qquad\qquad(\mbox{by the
monotonicity of the
geometric mean})\\
&=& I\,,
\end{eqnarray*}
which together with \eqref{oct} give the required inequality.
\end{proof}

Now let $f$ be a continuous real function $f$ defined on an interval
$\mathcal{J}\subseteq\mathbb{R}$. The function $f$ is called {\it
operator convex} if
$$f\left(\frac{A+B}{2}\right)\le \frac{f(A)+f(B)}{2}$$
for all selfadjoint operators $A$ and $B$ with spectra contained in
$\mathcal{J}$. There are several statements equivalent to the
operator convexity; see \cite[Theorems 1.9 and 1.10]{seo}. In
particular, $f$ is operator convex if and only if
\begin{eqnarray}\label{jen1}
f\left(\sum_{i=1}^nX_i^*A_iX_i\right) \leq
\sum_{i=1}^nX_i^*f(A_i)X_i
\end{eqnarray} for all self-adjoint bounded operators $A_i$ with
spectra contained in $\mathcal{J}$ and all bounded operators $X_i$
with $\sum_{i=1}^nX_i^*X_i=I$; cf \cite{H-P}. The Jensen operator
inequality due to Davis \cite{DAV} and Choi \cite{CH} reads as
follows
$$f(\Phi(A))\le\Phi(f(A))\qquad\qquad (\mbox{The Choi--Davis--Jensen inequality})$$
where $\Phi$ is a unital positive linear map on
$\mathbb{B}(\mathscr{H})$, $f$ is operator convex and $A$ is a
self-adjoint operator whose spectrum ${\rm sp}(A)$ is contained in
$\mathcal{J}$.

Finally we show another type of Hua's operator inequality. Recall
that a {\it conditional expectation} $\Phi$ from a unital
$C^*$-algebra $\mathscr{A}$ of operators to a $C^*$-subalgebra
$\mathscr{B}$ of $\mathscr{A}$ containing its identity is a linear
norm reducing idempotent. Such a map is  completely positive and
satisfies the bimodule property $\Phi(AXB)=A\Phi(X)B$ for all
$A,B\in\mathscr{B}$ and $X\in \mathscr{A}$.

\begin{theorem} Let $f$ be an operator convex function on an interval $\mathcal{J}$ and $\Phi$ be a conditional expectation from a unital $C^*$-algebra
$\mathscr{A}$ of operators to a $C^*$-subalgebra $\mathscr{B}$ of
$\mathscr{A}$ containing its identity. If $C\in\mathscr{B}$ is
invertible and $B\in\mathscr{A}$ is self-adjoint and satisfies
$${\rm sp}(I-\Phi(B))\cup{\rm sp}((I+C^*C)^{-1})\cup{\rm sp}({C^*}^{-1}BC^{-1})\subseteq\mathcal{J},$$
then
$$f(I-\Phi(B))+C^*\Phi\left(f\big({C^*}^{-1}BC^{-1}\big)\right)C\geq f\left((I+C^*C)^{-1}\right)(I+C^*C)\,.
$$
\end{theorem}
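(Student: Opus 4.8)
The plan is to remove the conditional expectation from the statement by a substitution together with the bimodule property, thereby reducing the claim to a pure operator‑convexity inequality, and then to apply the characterization \eqref{jen1} in a renormalized (non‑unital) form. The outcome of the renormalization will automatically produce the factor $(I+C^*C)$ on the right‑hand side.

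First I would set $Z={C^*}^{-1}BC^{-1}$, so that $Z$ is self‑adjoint with ${\rm sp}(Z)\subseteq\mathcal{J}$ and $B=C^*ZC$. Since $C,C^*\in\mathscr{B}$, the bimodule property gives $\Phi(B)=C^*\Phi(Z)C$. Writing $W=\Phi(Z)$ and using that a conditional expectation is a unital positive linear map, the Choi--Davis--Jensen inequality yields $f(W)=f(\Phi(Z))\le\Phi(f(Z))$. Conjugation by $C$ preserves the order, so
$$f(I-\Phi(B))+C^*\Phi(f(Z))C=f(I-C^*WC)+C^*\Phi(f(Z))C\ge f(I-C^*WC)+C^*f(W)C.$$
At this stage $\Phi$ has disappeared, and it remains to prove $f(I-C^*WC)+C^*f(W)C\ge f\big((I+C^*C)^{-1}\big)(I+C^*C)$. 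The three spectral hypotheses are exactly what is needed: positivity of $\Phi$ turns $mI\le Z\le MI$ into $mI\le W\le MI$, giving ${\rm sp}(W)\subseteq\mathcal{J}$, while ${\rm sp}(I-C^*WC)={\rm sp}(I-\Phi(B))\subseteq\mathcal{J}$ and ${\rm sp}((I+C^*C)^{-1})\subseteq\mathcal{J}$ are assumed.

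For the reduced inequality I would invoke \eqref{jen1} after renormalizing. Put $S=I+C^*C$ and choose $X_1=I,\ A_1=I-C^*WC,\ X_2=C,\ A_2=W$. The decisive feature of this choice is that the data are balanced: $X_1^*A_1X_1+X_2^*A_2X_2=(I-C^*WC)+C^*WC=I$, whereas $X_1^*X_1+X_2^*X_2=S$. Setting $Y_i=X_iS^{-1/2}$ makes $\sum_iY_i^*Y_i=I$, so \eqref{jen1} applies with the $Y_i$ and gives
$$f(D)=f\Big(\textstyle\sum_i Y_i^*A_iY_i\Big)\le\sum_i Y_i^*f(A_i)Y_i=S^{-1/2}\big(f(I-C^*WC)+C^*f(W)C\big)S^{-1/2},$$
where $D=(I+C^*C)^{-1}=S^{-1}$. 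Conjugating by $S^{1/2}$ yields $S^{1/2}f(D)S^{1/2}\le f(I-C^*WC)+C^*f(W)C$, and since $D$ and $S$ are both functions of $C^*C$ they commute, so $S^{1/2}f(D)S^{1/2}=f(D)\,S=f\big((I+C^*C)^{-1}\big)(I+C^*C)$. Chaining this with the earlier estimate gives the asserted inequality.

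The routine verifications are that $Z$ is self‑adjoint with spectrum in $\mathcal{J}$ and that $C^{-1}\in\mathscr{B}$ (inverses of invertible elements stay in a $C^*$‑algebra). The step I expect to be the crux is the balanced choice $X_1=I,\ X_2=C$ in the renormalized Jensen inequality: it is precisely this choice that forces $\sum_iX_i^*A_iX_i=I$ together with weight $S=I+C^*C$, so that the renormalization factor $S$ on the right exactly matches the $(I+C^*C)$ demanded by the target. Keeping the direction of the renormalization and the commutation $f(D)S=Sf(D)$ straight is where care is required.
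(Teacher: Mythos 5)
Your proposal is correct and is essentially the paper's own proof: your renormalized weights $Y_1=(I+C^*C)^{-1/2}$ and $Y_2=C(I+C^*C)^{-1/2}$ are precisely the paper's $X$ and $Y$, and both arguments combine the Choi--Davis--Jensen inequality, the bimodule property, and \eqref{jen1} with the same data $A_1=I-\Phi(B)$, $A_2={C^*}^{-1}\Phi(B)C^{-1}$. The only difference is presentational: you substitute $B=C^*ZC$ to eliminate $\Phi$ before invoking \eqref{jen1}, whereas the paper performs the equivalent simplification $Y^*{C^*}^{-1}\Phi(B)C^{-1}Y=X\Phi(B)X$ afterwards.
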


\begin{proof}
Put
$$X=(I+C^*C)^{-1/2} \quad{and}\quad Y=C(I+C^*C)^{-1/2}.$$
Then $X^*X+Y^*Y=I$. We have
\begin{align*}
f(I-\Phi(B))&+C^*\Phi(f({C^*}^{-1}BC^{-1}))C\\
&=X^{-1}\left[Xf(I-\Phi(B))X+Y^*\Phi(f({C^*}^{-1}BC^{-1}))Y\right]X^{-1}\\
&\geq X^{-1}\left[Xf(I-\Phi(B))X+Y^*f(\Phi({C^*}^{-1}BC^{-1}))Y\right]X^{-1}\\
&\qquad\qquad\qquad\qquad\qquad\qquad\quad (\mbox{ by the Choi--Davis--Jensen inequality})\\
&= X^{-1}\left[Xf(I-\Phi(B))X+Y^*f({C^*}^{-1}\Phi(B)C^{-1})Y\right]X^{-1}\\
&\qquad\qquad\qquad\qquad\qquad\qquad\qquad\quad\quad (\mbox{by the bimodule property of~} \Phi)\\
&\geq X^{-1}f\Big(X(I-\Phi(B))X+Y^*{C^*}^{-1}\Phi(B)C^{-1}Y\Big)X^{-1}\\
&\qquad\qquad\qquad\qquad\qquad\qquad\qquad\qquad\quad (\mbox{by~} X^*X+Y^*Y=I \mbox{ and } \eqref{jen1})\\
&= X^{-1}f\Big(X(I-\Phi(B))X+X\Phi(B)X\Big)X^{-1}\\
&= f(X^2)X^{-2}\qquad\qquad\qquad\qquad\qquad\quad\quad (\mbox{by
the functional calculus})\\
&= f\left((I+C^*C)^{-1}\right)(I+C^*C)\,.
\end{align*}
\end{proof}

\begin{corollary} Let $f$ be an operator convex function on an interval $\mathcal{J}$, $\varphi$ be a state and $\gamma>0$.
If $B$ is self-adjoint, $1-\varphi(B)$ and $1/(\gamma+1)$ belong to
$\mathcal{J}$ and ${\rm sp}(B/\gamma)\subseteq \mathcal{J}$, then
$$f(1-\varphi(B))+\gamma\varphi\big(f(B/\gamma)\big)\geq(1+\gamma)f\left({1\over1+\gamma}\right).$$
\end{corollary}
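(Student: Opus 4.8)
The plan is to derive this Corollary directly from the preceding Theorem by specializing the conditional expectation $\Phi$ and the invertible operator $C\in\mathscr{B}$ appropriately. The state $\varphi$ on $\mathbb{B}(\mathscr{H})$ is a scalar-valued positive unital linear functional, and any state is indeed a conditional expectation onto the scalar multiples of the identity $\mathscr{B}=\mathbb{C}I$: it is linear, norm-reducing, idempotent on scalars, and satisfies the bimodule property $\varphi(\lambda X\mu)=\lambda\varphi(X)\mu$ for scalars $\lambda,\mu$. Thus I would set $\mathscr{B}=\mathbb{C}I$ and identify $\Phi$ with the map $X\mapsto\varphi(X)I$.

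The key step is to choose $C$ so that the operator coefficient $C^*C$ reduces to the scalar $\gamma$. Since $\mathscr{B}=\mathbb{C}I$, take $C=\sqrt{\gamma}\,I$, which is invertible in $\mathscr{B}$. Then $C^*C=\gamma I$, so that $I+C^*C=(1+\gamma)I$ and $(I+C^*C)^{-1}=\frac{1}{1+\gamma}I$, while ${C^*}^{-1}BC^{-1}=\gamma^{-1}B=B/\gamma$. Substituting these into the Theorem's conclusion, the left-hand side becomes $f\big((1-\varphi(B))I\big)+\gamma\,\varphi\big(f(B/\gamma)\big)I$, and the right-hand side becomes $f\big(\frac{1}{1+\gamma}I\big)(1+\gamma)I=(1+\gamma)f\big(\frac{1}{1+\gamma}\big)I$. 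Reading these scalar-multiples-of-$I$ as scalars (via $f(\lambda I)=f(\lambda)I$ by the functional calculus) yields exactly the claimed inequality.

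The only thing requiring care is checking that the spectral hypotheses of the Theorem specialize correctly. Here $\Phi(B)=\varphi(B)I$ so ${\rm sp}(I-\Phi(B))=\{1-\varphi(B)\}$, ${\rm sp}((I+C^*C)^{-1})=\{1/(1+\gamma)\}$, and ${\rm sp}({C^*}^{-1}BC^{-1})={\rm sp}(B/\gamma)$; these match precisely the stated assumptions that $1-\varphi(B)$ and $1/(\gamma+1)$ lie in $\mathcal{J}$ and that ${\rm sp}(B/\gamma)\subseteq\mathcal{J}$. I do not anticipate a genuine obstacle; the main point to verify is that a state qualifies as a conditional expectation onto $\mathbb{C}I$ so that the Theorem applies verbatim, after which the computation is a routine substitution.
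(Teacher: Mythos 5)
Your proposal is correct and matches the intended derivation: the paper states the corollary without proof as an immediate consequence of the preceding theorem, obtained exactly as you do by viewing the state $\varphi$ as the conditional expectation $X\mapsto\varphi(X)I$ onto $\mathbb{C}I$ and taking $C=\sqrt{\gamma}\,I$. Your verification of the spectral hypotheses and the scalar identification via $f(\lambda I)=f(\lambda)I$ is exactly what is needed.
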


\medskip

\end{document}